\documentclass [12pt,reqno]{amsart}
\pagestyle{plain}
\addtolength{\hoffset}{-0.5cm}
\addtolength{\textwidth}{1cm}
\usepackage{amsmath}
\usepackage{amsthm}
\usepackage{dsfont}
\newtheorem{thm}{Theorem} 
\usepackage{amsfonts, amssymb}
\usepackage[utf8]{inputenc} 
\usepackage[english]{babel}
\usepackage{mathtools}
\usepackage{ mathrsfs }
\usepackage{enumerate}
\usepackage{ amssymb }
\usepackage{cite}

\newtheorem{theorem}{Theorem}
\newtheorem{Proposition}[theorem]{Proposition}

\newtheorem*{theorem*}{Theorem}

\newtheorem{lemma}[thm]{Lemma}

\newtheorem{corollary}[thm]{Corollary}
\theoremstyle{definition}

\theoremstyle{definition}

\theoremstyle{definition}

\numberwithin{equation}{section}

\def\R{{\mathbb R}}



\def\Z{{\mathbb Z}}

\def\N{{\mathbb N}}

\def\wt{\widetilde}



\def\<{\langle}
\def\>{\rangle}

\def \aa {{\alpha}}
\def \ll {{\lambda}}
\def \LL {{\Lambda}}
\def \H {{\mathds{H}}}

\begin{document}

\title{Dimension of a snowflake of a finite Euclidean subspace}


\begin{abstract}
Let $X$ be an $n$-point subset of a Euclidean space and $0 < a < 1$. The classical theorem of Schoenberg implies that the snowflake space $X^a$ can be isometrically embedded into Euclidean space. 
In the paper we show that points in the image of such an embedding always are in general position.  
As application we prove the analogue of Schoenberg's result for quotients of Euclidean spaces by finite groups. 
\end{abstract}
\keywords{Isometric Embedding, Snowflake metric}
\subjclass[2010]{51F99}

\author{Vladimir Zolotov}
\address[Vladimir Zolotov]{Steklov Institute of Mathematics, Russian Academy of Sciences, 27 Fontanka, 191023 St.Petersburg, Russia, University of Cologne, Albertus-Magnus-Platz, 50923 Köln, Germany and Mathematics and Mechanics Faculty, St. Petersburg State University, Universitetsky pr., 28, Stary Peterhof, 198504, Russia.}

\maketitle

\section{Introduction}

For $n \in \N$ we denote by $E^n$ an $n$-dimensional Euclidean space and by $\H$ a separable Hilbert space.  In the paper we discuss topics related to
the following result by Schoenberg.

\begin{Proposition}[\cite{SCH}, Corollary 1] \label{ShoThm}
Let $X$ be an $n$-point subspace of $\H$. Then for $0 < \aa < 1$ the snowflake space $X^{\aa}$ can be isometrically embedded into $E^{n-1}$. 
\end{Proposition}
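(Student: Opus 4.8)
The plan is to route everything through Schoenberg's linear-algebraic criterion for Euclidean embeddability and the stability of the negative-type property under subordination. Recall that a finite metric space with points $x_1,\dots,x_n$ embeds isometrically into Euclidean space exactly when its squared-distance kernel $d^2$ is of \emph{negative type}, meaning $\sum_{i,j}c_ic_j\,d^2(x_i,x_j)\le 0$ for every choice of reals with $\sum_i c_i=0$. Concretely, I would fix $x_1$ as a base point and form the $(n-1)\times(n-1)$ matrix $G_{ij}=\tfrac12\big(d^2(x_1,x_i)+d^2(x_1,x_j)-d^2(x_i,x_j)\big)$; a short computation identifies negative type of $d^2$ with $G\succeq 0$, and a factorization $G=V^{\top}V$ then realizes the space isometrically with $x_1$ at the origin. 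Since $G$ has size $n-1$, its rank is at most $n-1$, so the realization already lives in $E^{n-1}$. This single statement supplies both the embeddability and the sharp dimension bound.

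Because $X$ is a subset of $\H$, the kernel $\psi(x,y)=\|x-y\|^2$ is automatically of negative type. The snowflake $X^{\alpha}$ has squared-distance kernel $\|x-y\|^{2\alpha}=\psi(x,y)^{\alpha}$, so by the criterion above it suffices to prove that $\psi^{\alpha}$ is again of negative type for $0<\alpha<1$. For this I would invoke Schoenberg's exponential characterization — $\psi$ is of negative type if and only if $e^{-t\psi}$ is positive definite for all $t>0$ — together with the subordination identity
\[
\lambda^{\alpha}=\frac{\alpha}{\Gamma(1-\alpha)}\int_0^{\infty}\big(1-e^{-t\lambda}\big)\,t^{-1-\alpha}\,dt,\qquad \lambda\ge 0,
\]
valid for $0<\alpha<1$ with a positive constant in front. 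Substituting $\lambda=\psi(x,y)$, each integrand $1-e^{-t\psi}$ is of negative type: indeed $e^{-t\psi}$ is positive definite, and $c-\phi$ is of negative type whenever $\phi$ is positive definite (the diagonal terms vanish since $\psi(x,x)=0$). As the cone of negative-type kernels is closed under nonnegative combinations and pointwise limits, integrating against the positive measure $t^{-1-\alpha}\,dt$ keeps $\psi^{\alpha}$ of negative type, which is exactly what we need.

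The main obstacle is the input to the exponential characterization, namely the positive definiteness of the Gaussian kernel $e^{-t\|x-y\|^2}$. Since the $n$ points span a finite-dimensional subspace of $\H$, I would prove this on $E^{m}$ by writing the Gaussian as a Fourier average, $e^{-t\|x-y\|^2}=\int_{E^m}e^{i\langle \xi,\,x-y\rangle}\,d\mu_t(\xi)$ with $\mu_t$ a positive Gaussian measure; each kernel $e^{i\langle\xi,x-y\rangle}=e^{i\langle\xi,x\rangle}\,\overline{e^{i\langle\xi,y\rangle}}$ is manifestly positive definite, and a positive average of positive-definite kernels is positive definite. Once this is secured, every remaining step is a routine verification, and the dimension count in the first paragraph delivers the target space $E^{n-1}$.
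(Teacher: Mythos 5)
Your proposal is correct and follows essentially the same route as the paper's source: it combines the negative-type criterion for Euclidean embeddability (the paper's Proposition \ref{NegTypeEmbedd}, with the dimension bound coming from the rank of the $(n-1)\times(n-1)$ Gram matrix), the subordination integral for $\lambda^{\alpha}$ (Proposition \ref{ShoFormula}), and the positive definiteness of the Gaussian kernel (Proposition \ref{ShoPos}). All steps check out, so there is nothing to add.
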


We prove the following theorem which claims that points in the image of the snowflake space $X^{\aa}$ from the Proposition \ref{ShoThm} are in general position.  

\begin{theorem} \label{DimThm}
Let $X$ be an $n$-point subspace of $H$. Then for $0 < \aa < 1$ the snowflake space $X^{\aa}$  cannot be isometrically embedded into $E^{n-2}$. 
\end{theorem}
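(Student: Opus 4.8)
The plan is to upgrade Schoenberg's embedding result to a full-dimensionality statement: I will show that in \emph{any} isometric embedding of $X^{\aa}$ into a Euclidean space, the $n$ image points are affinely independent. Since $n$ affinely independent points span an affine subspace of dimension $n-1$, they cannot lie in $E^{n-2}$, which is exactly the assertion of Theorem \ref{DimThm}. Write $p_1,\dots,p_n$ for the points of $X$, with $d_{ij}=\|p_i-p_j\|$, and let $q_1,\dots,q_n$ be their images under a hypothetical embedding into Euclidean space, so that $\|q_i-q_j\|^2=d_{ij}^{2\aa}$. For any $\xi\in\R^n$ with $\sum_i\xi_i=0$ one has the identity $\sum_{i,j}\xi_i\xi_j\|q_i-q_j\|^2=-2\,\|\sum_i\xi_i q_i\|^2$. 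Hence affine independence of the $q_i$ is equivalent to the strict inequality $\sum_{i,j}\xi_i\xi_j\, d_{ij}^{2\aa}<0$ holding for every nonzero $\xi$ with $\sum_i\xi_i=0$, and this strict negativity is what I must prove.

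To establish it I would use the subordination formula $t^{\aa}=C_{\aa}\int_0^\infty (1-e^{-ut})\,u^{-1-\aa}\,du$, valid for $t\ge 0$ with a constant $C_{\aa}>0$. Substituting $t=d_{ij}^2$, multiplying by $\xi_i\xi_j$ and summing, the constant term contributes $\big(\sum_i\xi_i\big)^2=0$, leaving
\[ \sum_{i,j}\xi_i\xi_j\, d_{ij}^{2\aa}=-C_{\aa}\int_0^\infty\Big(\sum_{i,j}\xi_i\xi_j\,e^{-u\,d_{ij}^2}\Big)\,u^{-1-\aa}\,du. \]
Convergence of the integral and the interchange of summation and integration are routine, by absolute convergence near $0$ and near $\infty$.

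The decisive step is to show that, for each fixed $u>0$, the inner sum is strictly positive whenever $\xi\neq 0$. This is precisely the statement that the Gaussian kernel $(p,p')\mapsto e^{-u\|p-p'\|^2}$ is \emph{strictly} positive definite. Since the finitely many distinct points $p_1,\dots,p_n$ lie in some finite-dimensional subspace, this reduces to the classical fact that the Gaussian radial basis kernel is strictly positive definite on $\R^d$: via Bochner's theorem its Fourier transform is a strictly positive Gaussian density, so distinct nodes yield a positive definite Gram matrix. Granting this, the integrand above is strictly positive for every $u>0$, the integral is strictly positive, and the sign in the displayed identity forces $\sum_{i,j}\xi_i\xi_j\, d_{ij}^{2\aa}<0$. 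This gives the required strict negativity, hence affine independence of the $q_i$, and the theorem follows. The main obstacle is exactly isolating and justifying this strictness: Schoenberg's embedding argument needs only semidefiniteness, and the whole improvement from $E^{n-1}$ to ruling out $E^{n-2}$ rests on promoting positive definiteness of the Gaussian kernel to its \emph{strict} version for distinct points.
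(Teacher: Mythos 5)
Your proof is correct, and its skeleton (reduce to strict negativity of $\sum_{i,j}\xi_i\xi_j d_{ij}^{2\aa}$ via the identity $\sum_{i,j}\xi_i\xi_j\|q_i-q_j\|^2=-2\|\sum_i\xi_i q_i\|^2$, then expand $t^{\aa}$ by the subordination integral) is exactly that of the paper, which packages the strict inequality as Lemma \ref{StrictNeg} and the identity as Corollary \ref{CorGeomNeg}. Where you genuinely diverge is at the decisive strictness step. You import the classical fact that the Gaussian kernel is \emph{strictly} positive definite on $\R^d$ (via Bochner: the Fourier transform of a Gaussian is everywhere positive, and distinct nodes give linearly independent characters), so the integrand $\sum_{i,j}\xi_i\xi_j e^{-u d_{ij}^2}$ is positive for every $u>0$ and the conclusion is immediate. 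The paper deliberately avoids invoking strict positive definiteness: it uses only Schoenberg's non-strict inequality $\LL \wt S(\ll)\LL^T\ge 0$ (Proposition \ref{ShoPos}), observes that vanishing of the integral forces $\LL\wt S(\ll)\LL^T=0$ for all $\ll>0$, hence $\LL D^{\wt\aa}\LL^T=0$ for every exponent $\wt\aa$, and then lets $\wt\aa\to 0$ so that $D^{0}$ becomes the squared-distance matrix of a regular simplex, whose nondegeneracy is elementary. Your route is shorter and more standard once one grants strict positive definiteness of the Gaussian radial basis kernel, but that is precisely the nontrivial external input; the paper's route buys self-containedness, staying entirely within the non-strict inequalities already present in Schoenberg's original argument at the cost of the extra limiting step in $\aa$. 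If you keep your version, you should either cite a reference for strict positive definiteness of the Gaussian kernel or spell out the Bochner computation, since, as you yourself note, the entire improvement over $E^{n-1}$ lives in that one word ``strict.''
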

Theorem \ref{DimThm} answers the question by  H. Maehara see \cite{M}, Problem 2.8. 

In order to state an analogue of Proposition \ref{ShoThm} for quotients of Euclidean space by finite groups we have to define certain quotient metric spaces $Q(n,G)$. Which we use as target spaces for isometric embeddings of snowflakes in this setting. 
Let $G$ be a finite group and $n \in \N$. We denote the group ring of $G$ consisting of all functions $f:G \rightarrow \R$ by $\R[G]$. We consider $\R[G]$ 
as a Euclidean space with a standard Euclidean structure. 
We denote $\{(f_1,\dots,f_n) \in \R[G]^n \vert \sum_{i = 1}^n\sum_{g \in G}f_i(g) = 1\}$ by  $L_1(n,G)$. We consider an action of $G$ on  $L_1(n,G)$ induced by regular action on its coordinates, we denote the corresponding metric quotient $L_1(n,G)/G$  by $Q(n,G)$.  


\begin{corollary}\label{Cor}
Let $X$ be an $n$-point metric space and let $G$ be a finite group acting on Euclidean space $E^m$ by isometries. Suppose that $X$ can be isometrically embedded into the metric quotient $E^m/G$. Then, for every $0 \le \aa \le 1$ the snowflake $X^\aa$ can be isometrically embedded into $Q(n,G)$. 
\end{corollary}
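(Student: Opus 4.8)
The plan is to reduce the statement to a $G$-equivariant version of Schoenberg's embedding and then to realize the resulting configuration inside the concrete model $\R[G]^n$. Write $X=\{x_1,\dots,x_n\}$ and fix a lift of the given embedding $X\hookrightarrow E^m/G$, i.e. points $p_1,\dots,p_n\in E^m$ with $d_X(x_k,x_l)=\min_{h\in G}\|p_k-hp_l\|$. I would look for functions $F_1,\dots,F_n\in L_1(n,G)$ whose $G$-orbits realize the snowflaked distances, and organize the whole construction around the quotient-distance identity $\min_{h}\|F_k-hF_l\|^2=d_X(x_k,x_l)^{2\aa}$.

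First, for $0<\aa<1$ I would produce a $G$-equivariant snowflake feature map. Let $\mathcal{K}$ be a Hilbert space carrying an isometric $G$-action, and let $\Phi\colon E^m\to\mathcal{K}$ be $G$-equivariant (so $\Phi(hp)=h\Phi(p)$) with $\|\Phi(p)-\Phi(q)\|^2=\|p-q\|^{2\aa}$. Such a map exists by Schoenberg's classical argument underlying Proposition \ref{ShoThm}: the kernel $\|p-q\|^{2\aa}$ is conditionally negative definite, and it is realized through the Gaussian feature map $p\mapsto$ (normalized Gaussian centred at $p$) integrated against $\ll^{-1-\aa}\,d\ll$; since $G$ acts on $E^m$ by isometries it acts on these Gaussians by sending the Gaussian at $p$ to the Gaussian at $hp$, so the feature map is automatically $G$-equivariant. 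Equivariance then gives, for all $k,l$,
\[
\min_{h\in G}\|\Phi(p_k)-h\Phi(p_l)\|^2=\min_{h\in G}\|p_k-hp_l\|^{2\aa}=\Big(\min_{h\in G}\|p_k-hp_l\|^2\Big)^{\aa}=d_X(x_k,x_l)^{2\aa},
\]
so $x_k\mapsto[\Phi(p_k)]$ is already an isometric embedding of $X^{\aa}$ into $\mathcal{K}/G$.

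The heart of the proof is to transport this finite configuration into $\R[G]^n$ with its fixed standard Euclidean structure. Let $W\subseteq\mathcal{K}$ be the $G$-invariant subspace generated by $\Phi(p_1),\dots,\Phi(p_n)$; being generated by $n$ vectors, $W$ is a quotient of the free module $\R[G]^n$, hence in every isotypic component the multiplicity of an irreducible $\rho$ in $W$ is at most $n\dim\rho$, which is exactly its multiplicity in $\R[G]^n$. The point I expect to be the main obstacle is that the obvious equivariant surjection $\R[G]^n\to W$ is \emph{not} isometric; one cannot simply identify $W$ with a subrepresentation and keep distances, because two $G$-invariant inner products on the same representation may differ by different scalars on different isotypic blocks. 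The way around this is to invoke the existence of \emph{some} $G$-equivariant isometric embedding $\iota\colon W\hookrightarrow\R[G]^n$: on each isotypic component one embeds the finite-dimensional inner-product space inherited from $\mathcal{K}$ into the round standard one of at least equal multiplicity, which is always possible after diagonalizing and rescaling an orthogonal basis. Setting $F_k^0=\iota(\Phi(p_k))$ then yields $\min_h\|F_k^0-hF_l^0\|^2=d_X(x_k,x_l)^{2\aa}$, i.e. an isometric embedding of $X^{\aa}$ into $\R[G]^n/G$.

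It remains to meet the normalization $\sum_{i,g}(F_k)_i(g)=1$ defining $L_1(n,G)$. The all-ones vector $\mathbf{1}=(\mathbf{1}_G,\dots,\mathbf{1}_G)$ spans the trivial subrepresentation, so $G$-equivariant orthogonal maps act on the trivial isotypic component of $\R[G]^n$ (identified with $\R^n$) by an arbitrary rotation, and translation by a $G$-invariant vector acts there by an arbitrary translation; crucially, both families of moves leave every quantity $\|F_k-hF_l\|$ unchanged, since they only affect the $G$-fixed directions. As the projections of $F_1^0,\dots,F_n^0$ to this $\R^n$ span an affine subspace of dimension at most $n-1$, a rigid motion carries them into the affine hyperplane $\{\langle\,\cdot\,,\mathbf{1}\rangle=1\}$; after applying it, the resulting $F_k$ satisfy $\sum_{i,g}(F_k)_i(g)=1$ while still realizing the snowflaked distances in the quotient, so $x_k\mapsto[F_k]$ embeds $X^{\aa}$ isometrically into $Q(n,G)$. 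Finally the endpoints are immediate: for $\aa=1$ one replaces the Gaussian feature map by a $G$-equivariant isometric embedding of $E^m$ into a sum of regular representations and runs the same realization argument, and for $\aa=0$ the snowflake is the discrete metric on $n$ points, which embeds into any $Q(n,G)$ via $n$ distinct $G$-invariant points of $L_1(n,G)$.
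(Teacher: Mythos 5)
Your proof is correct in its essentials, but it follows a genuinely different route from the paper's. The paper reduces to the preimage $Y=\rho^{-1}(X)$, applies Schoenberg's theorem to embed $Y^{\alpha}$ into $E^{|G|n-1}$, and then invokes Theorem \ref{DimThm} --- the main result of the paper --- to guarantee that the embedded points affinely span $E^{|G|n-1}$, so that the $G$-action on the finite set $Y^{\alpha}$ extends to an isometric action on the ambient space; it then identifies the resulting quotient $Q_{\alpha}$ with $Q(n,G)$ only indirectly, by showing $Q_{0}\cong Q(n,G)$ and combining a finiteness statement for isometry classes of quotient metrics with a Gromov--Hausdorff continuity argument. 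You instead obtain the $G$-action on the target for free, by observing that Schoenberg's Gaussian feature map is $G$-equivariant, and then you land in $\R[G]^n$ directly by representation theory: the module $W$ generated by the $n$ image points is a quotient of $\R[G]^n$, hence by semisimplicity a submodule, and any two $G$-invariant inner products on it are related by an equivariant positive operator whose equivariant square root yields the required equivariant isometric embedding. This buys you three things the paper has to work for: you never need Theorem \ref{DimThm} (so no general-position hypothesis and no limiting argument for degenerate configurations), you never need the Gromov--Hausdorff finiteness-and-continuity argument, and you identify the target as $Q(n,G)$ explicitly rather than up to an abstract isometry. The price is two standard but nontrivial points that you gloss over: the feature map into the direct integral $\int^{\oplus}H_{\lambda}\,\lambda^{-1-2\alpha}\,d\lambda$ is only defined after subtracting a basepoint, so the induced $G$-action is affine rather than linear --- fix this by restricting to the affine span of the finite orbit configuration and linearizing at its $G$-fixed centroid; and ``diagonalizing and rescaling an orthogonal basis'' is not quite the right picture for real irreducibles of complex or quaternionic type, whereas the positive-operator argument is type-independent. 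Finally, for $\alpha=1$ you should not attempt to embed all of $E^m$ into copies of the regular representation (its multiplicities need not be bounded by those of $\R[G]^n$); embed only the submodule generated by the $n$ configuration points, exactly as in the case $0<\alpha<1$.
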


\section{Inequalities of negative type}

Let $x_1,\dots,x_n$ be points in a metric space $X$, $D$ be an $n \times n$ matrix  given by $D_{ij} = d(x_i,x_j)^2$ and $\LL = (\ll_1,\dots \ll_n)$ be such that $\sum \ll_i  = 0$. The inequality of the form $\LL D \LL^T \le 0$ is called an inequality of the negative type.

The proof of the Proposition \ref{ShoThm} is based on the following characterization of embeddability into $H$ in terms of inequalities of the negative type (see \cite{SCH}, \cite{SCH2} or \cite{SEI}).

\begin{Proposition}[\cite{SCH}, Section 3, Formula 5] \label{NegTypeEmbedd}
Let $X = \{x_1, \dots, x_n\}$ be a finite metric space and $D$ be an $n \times n$ matrix $D$ given by $D_{ij} = d(x_i,x_j)^2$.  $X$ can be isometrically embedded into $H$, iff  for every $\LL = (\ll_1,\dots \ll_n)$  such that $\sum \ll_i  = 0$ we have $\LL D \LL^T \le 0$.
\end{Proposition}

The proof of Theorem \ref{DimThm} basically can be found in the proof of Proposition \ref{ShoThm} modulo the following lemma providing a 
geometric characterization of inequalities of the negative type.   

\begin{lemma}\label{GeomNeg}
Let $x_1,\dots,x_k, x_{k+1},\dots,x_l$ be a family of points in Euclidean space $E^m$, $\ll_1,\dots ,\ll_k, \ll_{k+1},\dots \ll_l \in \R$ be such that $\ll_1,\dots, \ll_k \ge 0$, $\sum_{1}^{k}\ll_i = 1$, $\ll_{k+1},\dots, \ll_l \le 0$, $\sum_{k+1}^{l}\ll_i = -1$, $\LL = (\ll_1,\dots,\ll_l)$ and $D$ be an  $n \times n$  matrix given by $D_{ij} = d^2(x_i,x_j)$. Then $\LL D \LL^T  = -2 \vert x_+ x_- \vert^2$, where $x_+ = \sum_1^k \ll_i x_i$, $x_- = \sum_{k+1}^l \ll_i x_i$,.
\end{lemma}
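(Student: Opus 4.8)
The plan is to evaluate $\LL D \LL^T$ by a direct expansion, using only the hypothesis $\sum_{i=1}^{l} \ll_i = 0$. Regarding each $x_i$ as a position vector in $E^m$ and writing $d^2(x_i,x_j) = |x_i - x_j|^2 = |x_i|^2 - 2\<x_i,x_j\> + |x_j|^2$, I would expand
\[ \LL D \LL^T = \sum_{i,j} \ll_i \ll_j |x_i - x_j|^2 = \sum_{i,j}\ll_i\ll_j|x_i|^2 - 2\sum_{i,j}\ll_i\ll_j\<x_i,x_j\> + \sum_{i,j}\ll_i\ll_j|x_j|^2. \]

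The first key step is the observation that the two ``diagonal-type'' sums vanish. Since $\sum_j \ll_j = 0$, we get
\[ \sum_{i,j}\ll_i\ll_j|x_i|^2 = \Big(\sum_i \ll_i|x_i|^2\Big)\Big(\sum_j\ll_j\Big) = 0, \]
and symmetrically for the third sum. This leaves only the cross term, which factors as a perfect square:
\[ \LL D\LL^T = -2\sum_{i,j}\ll_i\ll_j\<x_i,x_j\> = -2\Big\langle \sum_i \ll_i x_i,\ \sum_j \ll_j x_j\Big\rangle = -2\Big|\sum_{i=1}^{l} \ll_i x_i\Big|^2. \]

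The second step is to identify $\sum_i \ll_i x_i$ geometrically. Splitting the sum according to the sign of the weights gives $\sum_{i=1}^{l} \ll_i x_i = x_+ - x_-$, where $x_+ = \sum_{i=1}^{k} \ll_i x_i$ and $x_- = \sum_{i=k+1}^{l} (-\ll_i) x_i$. Because the weights $\ll_1,\dots,\ll_k \ge 0$ sum to $1$ and the weights $-\ll_{k+1},\dots,-\ll_l \ge 0$ sum to $1$, both $x_+$ and $x_-$ are genuine convex combinations, hence well-defined points of $E^m$; consequently $|x_+ - x_-| = |x_+ x_-|$ is the Euclidean distance between them, and substituting yields $\LL D \LL^T = -2|x_+ x_-|^2$.

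There is no serious obstacle here: the argument is a one-line algebraic identity, and the only point requiring care is the bookkeeping of signs when separating the positive- and negative-weight contributions, reading $x_-$ as the barycenter $\sum_{k+1}^{l}(-\ll_i)x_i$ of the negatively weighted points so that $\sum_i \ll_i x_i$ comes out as the difference $x_+ - x_-$ rather than a sum. I would also remark that the constraint $\sum_i \ll_i = 0$ is exactly what makes the whole quantity translation-invariant, so that the expression $-2\big|\sum_i \ll_i x_i\big|^2$, and hence the conclusion, does not depend on the choice of origin in $E^m$.
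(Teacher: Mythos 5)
Your proof is correct, but it takes a genuinely different route from the paper's. The paper proves the lemma by reduction: it cites \cite{DM}, Theorem 1, for the special case of uniform weights ($l = 2k$, $\ll_1 = \dots = \ll_k = -\ll_{k+1} = \dots = -\ll_l$), deduces the case of rational weights $\ll_i = a_i/n$ by repeating points with multiplicity, and then obtains arbitrary real weights by a limiting procedure. Your expansion of the quadratic form --- killing the terms $\sum_{i,j}\ll_i\ll_j|x_i|^2$ via $\sum_j \ll_j = 0$ and recognizing the cross term as $-2\big|\sum_i \ll_i x_i\big|^2$ --- replaces all three steps by a single computation. What your route buys: it is self-contained (no external citation), it avoids both the rational-approximation step and the limit (each of which tacitly relies on continuity of both sides in $\LL$), it works verbatim in any inner product space (in particular in $\H$, not just $E^m$), and it isolates the role of each hypothesis: $\sum_i \ll_i = 0$ alone yields the identity $\LL D \LL^T = -2\big|\sum_i \ll_i x_i\big|^2$, while the normalizations $\sum_{1}^{k}\ll_i = 1$ and $\sum_{k+1}^{l}\ll_i = -1$ serve only to make $x_+$ and $x_-$ well-defined affine (indeed convex) combinations, i.e.\ genuine points whose distance is origin-independent. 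What the paper's route buys is essentially only brevity on the page, at the cost of leaning on \cite{DM}. A further merit of your write-up: you make explicit the sign convention that the claimed formula requires, namely reading $x_-$ as the barycenter $\sum_{k+1}^{l}(-\ll_i)x_i$, since with the statement's literal definition $x_- = \sum_{k+1}^{l}\ll_i x_i$ one would get $\sum_i \ll_i x_i = x_+ + x_-$; this small slip in the statement never surfaces in the paper's proof-by-citation, and your argument quietly corrects it.
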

\begin{proof}
For the case $l = 2k$, $\ll_1 = \dots = \ll_k = -\ll_{k+1} = \dots = -\ll_{l} = \frac{1}{n}$ see \cite{DM}, Theorem 1. By taking certain points multiple times 
we deduce the case $\ll_i = \frac{a_i}{n}$, $a_i \in \Z$ from the previous one.
Finally, the general case follows by the limiting procedure. 
\end{proof}

The following corollary is a direct implication of Lemma \ref{GeomNeg}.

\begin{corollary}\label{CorGeomNeg}
Points $x_1,\dots, x_l \in E^m$ are not in general position, iff there exists  $\LL = (\ll_1,\dots,\ll_l) \ne 0$ s.t., $\sum_{i=1}^{l}\ll_i = 0$ and  $\LL D \LL^T = 0$, where $D$ is an $l \times l$ matrix given by $D_{ij} = d^2(x_i,x_j)$.
\end{corollary}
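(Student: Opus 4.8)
The plan is to derive the corollary directly from Lemma \ref{GeomNeg} by an elementary scaling argument, after fixing the meaning of the phrase: to say that $x_1,\dots,x_l$ are \emph{not in general position} is to say that they are affinely dependent, i.e. there is a vector $\LL \neq 0$ with $\sum_{i=1}^l \ll_i = 0$ and $\sum_{i=1}^l \ll_i x_i = 0$. With this reading, the content of the corollary is that, among vectors $\LL$ annihilated by the all-ones functional, the condition $\sum_{i=1}^l \ll_i x_i = 0$ is equivalent to $\LL D \LL^T = 0$.

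First I would upgrade Lemma \ref{GeomNeg} from its normalized form to arbitrary $\LL$. The lemma is stated under the assumption that the positive coefficients sum to $1$ and the negative ones to $-1$, but this restriction is harmless. Given any $\LL \neq 0$ with $\sum_i \ll_i = 0$, note that $\LL$ must have at least one strictly positive and one strictly negative entry, since a nonzero vector summing to zero cannot be one-signed. Hence $s := \sum_{\ll_i > 0} \ll_i > 0$, and after relabelling the indices so that the nonnegative coefficients come first and the nonpositive ones last, the rescaled vector $\LL/s$ satisfies exactly the hypotheses of Lemma \ref{GeomNeg}. Since $\LL D \LL^T = s^2 (\LL/s) D (\LL/s)^T$, the lemma gives $\LL D \LL^T = -2 s^2 |x_+ x_-|^2$, with $x_+, x_-$ the barycenters attached to $\LL/s$. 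In particular $\LL D \LL^T \le 0$, and it vanishes if and only if the two barycenters coincide, that is, if and only if $\sum_{i=1}^l \ll_i x_i = 0$.

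With this equivalence in hand both implications are immediate. If the points are not in general position, choose a nontrivial affine dependence $\LL$; then $\sum_{i=1}^l \ll_i x_i = 0$, so $\LL D \LL^T = 0$ by the identity above, giving one direction. Conversely, if some $\LL \neq 0$ with $\sum_i \ll_i = 0$ satisfies $\LL D \LL^T = 0$, then the identity forces $\sum_{i=1}^l \ll_i x_i = 0$, which exhibits a nontrivial affine dependence and hence the failure of general position.

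The only genuinely delicate point is the bookkeeping in passing from the normalized statement of Lemma \ref{GeomNeg} to an arbitrary $\LL$: one must check that $s > 0$ whenever $\LL \neq 0$, that the indices may be permuted so that the sign pattern matches the lemma, and that vanishing coefficients may be placed on either side without affecting the conclusion. Everything else reduces to the observation that $\LL D \LL^T$ is, up to a positive factor, the negated squared length of $\sum_{i=1}^l \ll_i x_i$, so that its vanishing is precisely the affine dependence relation defining the failure of general position.
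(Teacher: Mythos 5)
Your proof is correct and follows exactly the route the paper intends: the paper simply declares Corollary \ref{CorGeomNeg} to be ``a direct implication of Lemma \ref{GeomNeg}'', and your scaling argument (normalizing $\LL$ by $s=\sum_{\ll_i>0}\ll_i$, permuting indices to match the lemma's sign pattern, and reading off that $\LL D \LL^T = -2\bigl|\sum_i \ll_i x_i\bigr|^2$ vanishes precisely at an affine dependence) is the straightforward filling-in of that claim, including the correct interpretation of $x_-$ as the barycenter of the negatively weighted points.
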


\section{Proof of Theorem \ref{DimThm}}

We need the following two lemmas which are parts of the proof of Proposition \ref{ShoThm}.

\begin{Proposition}[\cite{SCH}, Section 3, Formula 8]\label{ShoFormula}
For $0 < \aa < 2$ and $t > 0$ we have $$t^{\aa} =  c\big(\frac{\aa}{2}\big) \int_0^\infty(1 - e^{-\ll^2t^2})\ll^{-1-\aa}d\ll,$$
where $c(\aa) > 0$ is a certain function of $\aa$.  
\end{Proposition}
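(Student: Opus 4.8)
The plan is to prove the identity by a scaling substitution that decouples the variable $t$ from the integral. Writing $I(t) = \int_0^\infty (1 - e^{-\ll^2 t^2}) \ll^{-1-\aa}\,d\ll$, I would substitute $u = \ll t$, so that $\ll = u/t$ and $d\ll = du/t$. Since $\ll^{-1-\aa} = t^{1+\aa} u^{-1-\aa}$, the two powers of $t$ combine with the $1/t$ coming from $d\ll$ to leave exactly $t^{\aa}$, giving
\[
I(t) = t^{\aa} \int_0^\infty (1 - e^{-u^2}) u^{-1-\aa}\,du.
\]
Thus the entire $t$-dependence is concentrated in the prefactor $t^{\aa}$, and the remaining integral is a constant depending only on $\aa$. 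Setting $c(\aa/2)$ equal to its reciprocal then yields the claimed formula, provided this constant is finite and strictly positive.

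The next step is to verify convergence of $C := \int_0^\infty (1 - e^{-u^2}) u^{-1-\aa}\,du$, which is exactly where the hypothesis $0 < \aa < 2$ enters. Near $u = 0$ the estimate $1 - e^{-u^2} \sim u^2$ gives an integrand of order $u^{1-\aa}$, integrable precisely when $\aa < 2$; near $u = \infty$ the factor $1 - e^{-u^2}$ tends to $1$, leaving an integrand of order $u^{-1-\aa}$, integrable precisely when $\aa > 0$. Positivity is immediate, since $1 - e^{-u^2} > 0$ for every $u > 0$ forces $C > 0$, so that $c(\aa/2) := 1/C$ is a well-defined positive function of $\aa$, as required.

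If an explicit form of the constant is wanted, the final step is integration by parts with $f = 1 - e^{-u^2}$ and $dg = u^{-1-\aa}\,du$: the boundary terms vanish thanks to the same two-sided decay arranged above, and the substitution $v = u^2$ reduces the remaining integral to a Gamma integral, yielding $C = \tfrac{1}{\aa}\,\Gamma\!\big(1 - \tfrac{\aa}{2}\big)$ and hence $c(\aa/2) = \aa/\Gamma\!\big(1 - \tfrac{\aa}{2}\big)$. This computation is optional for the statement as written, which needs only $c > 0$. The manipulation is otherwise elementary, so there is no serious obstacle; the single point demanding care is the joint integrability check at the two endpoints, since it is precisely the pair of constraints $\aa > 0$ and $\aa < 2$ that make $C$ simultaneously finite and positive, and so justify defining $c(\aa/2) = 1/C$.
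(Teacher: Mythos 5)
Your proposal is correct. The paper gives no proof of this proposition at all --- it is quoted verbatim from Schoenberg's paper --- and your scaling substitution $u = \ll t$ together with the endpoint integrability checks (integrand of order $u^{1-\aa}$ at $0$ and $u^{-1-\aa}$ at $\infty$, which is exactly where $0 < \aa < 2$ enters) is the classical derivation of this formula; your optional evaluation $c\big(\tfrac{\aa}{2}\big) = \aa/\Gamma\big(1-\tfrac{\aa}{2}\big)$ is also correct.
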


In particular if $x_1, x_2$ are point in a metric space $X$ then we have  $$|x_1x_2|^{2\aa} = c(\aa) \int_0^\infty(1 - e^{-\ll^2\vert x_1 x_2 \vert^2})\ll^{-1-2\aa}d\ll,$$ for every  $0 < \aa < 1$.

\begin{Proposition}[\cite{SCH}, Section 3, Formula 7]\label{ShoPos}
For $x_1,\dots,x_n \in \H$, $\LL = (\ll_1,\dots,\ll_n)$ and $S$ an $n \times n$ matrix given by $S_{ij}= e^{-\ll^2 \vert x_ix_j\vert^2}$, we have 
$\LL S \LL^T \ge 0$. 
\end{Proposition}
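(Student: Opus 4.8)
The plan is to prove directly that the matrix $S$ is positive semidefinite, which is exactly the assertion $\LL S \LL^T \ge 0$ for all $\LL$. First I would reduce to a finite-dimensional setting: the finitely many points $x_1,\dots,x_n$ lie in a finite-dimensional subspace of $\H$, so after choosing an orthonormal basis we may assume $x_i \in E^d$ and that $\<\cdot,\cdot\>$ is the usual inner product. The key elementary observation is the factorization
$$\vert x_ix_j\vert^2 = \vert x_i\vert^2 - 2\<x_i,x_j\> + \vert x_j\vert^2,$$
which gives $S_{ij} = e^{-\ll^2\vert x_i\vert^2}\, e^{2\ll^2\<x_i,x_j\>}\, e^{-\ll^2\vert x_j\vert^2}$. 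Setting $D = \mathrm{diag}(e^{-\ll^2\vert x_i\vert^2})$ and $M_{ij} = e^{2\ll^2\<x_i,x_j\>}$, this reads $S = D M D$, so that $\LL S \LL^T = (\LL D) M (\LL D)^T$. Since $D$ is invertible with strictly positive diagonal, the map $\LL \mapsto \LL D$ is a bijection, and it therefore suffices to show that $M$ is positive semidefinite.

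To establish this, let $G$ be the Gram matrix $G_{ij} = \<x_i,x_j\>$, which is positive semidefinite. Expanding the exponential entrywise yields
$$M = \sum_{k=0}^{\infty} \frac{(2\ll^2)^k}{k!}\, G^{\circ k},$$
where $G^{\circ k}$ is the $k$-fold Hadamard (entrywise) product of $G$ with itself. The step carrying the real content is the Schur product theorem: the Hadamard product of two positive semidefinite matrices is again positive semidefinite. Granting it, each $G^{\circ k}$ is positive semidefinite, the scalars $(2\ll^2)^k/k!$ are nonnegative, and $M$ is therefore an entrywise absolutely convergent sum of positive semidefinite matrices, hence positive semidefinite. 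This completes the argument.

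I expect the Schur product theorem to be the only genuine obstacle; the remaining points are routine. Its standard proof is itself short: if $A$ and $B$ are Gram matrices of vectors $u_i$ and $v_i$, then $A \circ B$ is the Gram matrix of the tensors $u_i \otimes v_i$, and so is positive semidefinite. The only further care needed is to justify that the matrix series converges and that positive semidefiniteness passes to the limit, both immediate since convergence is absolute in each of the finitely many entries. As an alternative to the algebraic route one could argue analytically, using the Gaussian Fourier identity to write $\LL S \LL^T$ as the integral of $\bigl\vert \sum_i \ll_i e^{\sqrt{-1}\,\<\xi,x_i\>}\bigr\vert^2$ against a positive Gaussian weight on $E^d$, which is manifestly nonnegative; but the Schur-product argument is cleaner and entirely elementary, so that is the one I would write up.
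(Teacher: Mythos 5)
Your argument is correct. Note, however, that the paper offers no proof of this proposition at all: it is quoted verbatim from Schoenberg (\cite{SCH}, Section 3, Formula 7) and used as a black box in the proof of Lemma \ref{StrictNeg}. So what you have written is a self-contained substitute for the citation rather than a variant of an argument in the paper. Your route --- peel off the diagonal factors via $\vert x_ix_j\vert^2 = \vert x_i\vert^2 - 2\<x_i,x_j\> + \vert x_j\vert^2$, reduce to the entrywise exponential of the Gram matrix, and invoke the Schur product theorem on the termwise expansion --- is the standard algebraic proof that the Gaussian kernel is positive semidefinite, and every step checks out (including the $k=0$ term, the all-ones matrix, and the passage of positive semidefiniteness to the entrywise limit). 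Schoenberg's own derivation, which the paper implicitly leans on, is the Fourier-analytic one you mention as an alternative: writing $\LL S \LL^T$ as a Gaussian-weighted integral of $\bigl\vert \sum_i \ll_i e^{\sqrt{-1}\<\xi,x_i\>}\bigr\vert^2$. The two proofs buy slightly different things: the Schur-product argument is purely algebraic, needs no integration theory, and works verbatim over any inner-product space once you observe that the Gram matrix is positive semidefinite (so your preliminary reduction to $E^d$ is harmless but not even necessary); the Fourier argument generalizes more readily to Schoenberg's broader program of characterizing all positive definite radial functions. Either would serve the paper's purposes; yours is the more elementary and is a perfectly good way to make Proposition \ref{ShoPos} self-contained.
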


Propositions  \ref{ShoThm} and \ref{NegTypeEmbedd} imply that inequalities of the negative type hold in a snowflake of a Euclidean space. 
The following lemma claims that they also hold strict. 

\begin{lemma}\label{StrictNeg}
For every distinct $x_1,\dots,x_n  \in \H$ and every $\ll_1, \dots,\ll_n \in \R$ s.t. $\sum \ll_i = 0$ we have 
$\LL D^\aa \LL^T < 0$, where $0 < \aa < 1$ and $D^\aa$ is an $n \times n$ matrix given by $D_{ij} = \vert x_i x_j \vert^{2\aa}$.
\end{lemma}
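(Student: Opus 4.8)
The plan is to integrate the pointwise identity of Proposition \ref{ShoFormula} against the quadratic form and then to upgrade the non-strict inequality coming from Proposition \ref{ShoPos} to a strict one. First I would substitute $|x_ix_j|^{2\aa} = c(\aa)\int_0^\infty (1 - e^{-\ll^2|x_ix_j|^2})\ll^{-1-2\aa}\,d\ll$ into every entry of $D^\aa$. Forming the full quadratic form, the bracket $\sum_{i,j}\ll_i\ll_j(1-e^{-\ll^2|x_ix_j|^2})$ is $O(\ll^2)$ near $0$ (the constant term is $(\sum_i\ll_i)^2 = 0$, so the leading surviving term is quadratic) and is bounded near $\infty$, so the integrand is absolutely integrable and I may interchange the finite sum with the integral. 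Using $\sum_i\ll_i = 0$ once more to cancel the constant $1$ then gives
$$\LL D^\aa \LL^T = -c(\aa)\int_0^\infty \ll^{-1-2\aa}\,\LL S_\ll \LL^T\,d\ll, \qquad (S_\ll)_{ij} = e^{-\ll^2|x_ix_j|^2}.$$
By Proposition \ref{ShoPos} the integrand is $\le 0$ for each $\ll>0$, which already reproduces the non-strict inequality; the entire problem is therefore to show that $\LL S_\ll \LL^T > 0$ whenever $\LL\neq 0$.

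The main obstacle is exactly this strict positive definiteness of the Gaussian matrix $S_\ll$, since Proposition \ref{ShoPos} supplies only $\ge 0$. I would argue as follows. As only finitely many points are involved, I may assume $x_1,\dots,x_n$ lie in some $E^m$. The classical Bochner/Fourier representation of the Gaussian gives, for a strictly positive Gaussian density $g_\ll$ on $E^m$,
$$\LL S_\ll \LL^T = \sum_{i,j}\ll_i\ll_j \int_{E^m} e^{i\langle \xi,\, x_i - x_j\rangle}\, g_\ll(\xi)\,d\xi = \int_{E^m} \Big| \sum_{i} \ll_i\, e^{i\langle \xi, x_i\rangle}\Big|^2 g_\ll(\xi)\,d\xi \ge 0.$$
Because $g_\ll > 0$ everywhere, equality forces the exponential sum $\xi\mapsto\sum_i\ll_i e^{i\langle\xi,x_i\rangle}$ to vanish almost everywhere, hence (being real-analytic in $\xi$) to vanish identically. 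But the characters $\xi\mapsto e^{i\langle\xi,x_i\rangle}$ attached to the \emph{distinct} points $x_i$ are linearly independent, so this forces $\LL = 0$. Thus $\LL S_\ll \LL^T > 0$ for every $\ll>0$ whenever $\LL\neq 0$; note the hypothesis $\sum_i\ll_i=0$ is not even needed for this step.

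Finally, the integrand $\ll\mapsto \ll^{-1-2\aa}\,\LL S_\ll\LL^T$ is continuous and strictly positive on $(0,\infty)$, so its integral is strictly positive and $\LL D^\aa\LL^T < 0$ follows. (For $\LL = 0$ the expression vanishes, so the statement is to be read for nonzero $\LL$ with $\sum_i\ll_i = 0$.) I expect the routine parts to be the integrability estimate justifying the interchange of sum and integral; the genuinely new ingredient beyond the proof of Proposition \ref{ShoThm} is the strictness, namely the linear independence of the exponentials $e^{i\langle\xi,x_i\rangle}$, equivalently the strict positive definiteness of the Gaussian kernel on distinct points.
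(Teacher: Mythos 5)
Your proof is correct, and while it begins the same way as the paper (substituting Schoenberg's integral formula and invoking Proposition \ref{ShoPos} to reduce everything to the sign of $\LL S_\ll \LL^T$), it diverges at the decisive step. The paper never proves strict positive definiteness of the Gaussian matrix: instead it argues by contradiction, deduces $\LL S_\ll \LL^T = 0$ for \emph{all} $\ll>0$, feeds this back into the integral formula with an arbitrary exponent $\wt\aa$ to get $\LL D^{\wt\aa}\LL^T=0$ for all $0<\wt\aa<1$, and then lets $\wt\aa\to 0$, where $D^0$ becomes the squared-distance matrix of a regular simplex; since the vertices of a simplex are in general position, Corollary \ref{CorGeomNeg} gives the contradiction. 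You instead attack the Gaussian form head-on via Bochner's representation, showing $\LL S_\ll\LL^T=\int|\sum_i\ll_ie^{i\langle\xi,x_i\rangle}|^2g_\ll(\xi)\,d\xi>0$ for $\LL\neq 0$ by linear independence of distinct characters. Your route buys a stronger intermediate fact (strict positivity of $\LL S_\ll\LL^T$ with no constraint $\sum_i\ll_i=0$) and a direct rather than contradictory argument, at the cost of importing Fourier analysis; the paper's route stays entirely inside the Schoenberg machinery it has already set up, paying for that economy with the extra limiting passage $\wt\aa\to 0$ and the appeal to the geometric Corollary \ref{CorGeomNeg}. Both correctly flag (or share) the trivial caveat that $\LL=0$ must be excluded for the strict inequality.
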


\begin{proof}
Suppose that the statement of the Lemma is not true. I.e, there exists a family of distinct points $x_1, \dots, x_n \in H$ and  $\ll_1, \dots,\ll_n \in \R$ s.t. $\sum \ll_i = 0$ and $0 < \aa < 1$ satisfying $\LL D^\aa \LL^T = 0$.

Propositon \ref{ShoFormula} implies that $\LL S \LL^T = 0$, where the matrix $S = D^\aa $ is given by  ${S_{ij}= c(\aa) \int_0^\infty (1 - e^{-\ll^2 \vert x_ix_j\vert^2}) \ll^{-1-2\aa}d\ll}$. Hence we have ${\int_0^\infty \LL (-\wt S(\ll)) \LL^T \ll^{-1-2\aa}d\ll = 0},$ where $\wt S(\ll)$ is an $n \times n$ matrix given by $\wt S(\ll)_{ij} = e^{-\ll^2 \vert x_ix_j\vert^2}$.

Proposition \ref{ShoPos} provides that $\LL \wt S(\ll) \LL^T \ge 0$ for every $\ll > 0$. Which implies that $\LL \wt S(\ll) \LL^T= 0$ for every $\ll > 0$. Applying Proposition \ref{ShoFormula} with a different $\wt \aa$ we have that $\LL D^{\wt\aa} \LL^T = 0$, for every $0 < \wt \aa < 1$. Passing to the limit gives $\LL D^{0} \LL^T = 0$. Note that $D^0$ is (square) distance matrix of a standard simplex. Thus, $\LL D^{0} \LL^T = 0$ contradicts Corollary \ref{CorGeomNeg}.
\end{proof}

\begin{proof}[Proof of Theorem \ref{DimThm}]
Now Theorem \ref{DimThm} easily follows from Lemma \ref{StrictNeg} and Corollary \ref{CorGeomNeg}.
\end{proof}

\section{Proof of Corollary \ref{Cor}}
\begin{proof}
Without lost of generality we can assume that $X$ is a subset of $\R^m/G$. We also assume that points of $X$ are in general position, the result for degenerate point configurations follows by a limiting procedure. We remind that $|X| = n$. Let $\rho:\R^m \rightarrow \R^m/G$ be a quotient map.

At first we are going to show that for every $0 \le \aa < 1$ there exists an isometric action of $G$ on $\R^{\vert G\vert n-1}$ such that $X^\aa$ embeds into the quotient metric space $Q_\aa  = \R^{|G|n-1}/G$ isometrically. 

Let $Y = \rho^{-1}(X) \subset \R^m$, since the points of $X$ are in general position, we have $|Y| = |G|n$. Group $G$ acts on $Y$ and we have $X = Y/G$ and $X^\aa = Y^\aa/G$, $0 \le \aa \le 1$. By Proposition \ref{ShoThm} metric space $ Y^\aa$ can be isometrically embedded into $E^{n-1}$. We fix this embedding and identify $Y^\aa$ with its image. Note that by Theorem \ref{DimThm} the convex hull of $Y^\aa$ has full dimension. Thus an isometric action of $G$ on $Y^\aa$ can be extended to an isometric action on $E^{n-1}$. We can take $Q_\aa$ to be a corresponding quotient space. So $X^\aa$ embeds isometrically into $Q_\aa$.

In the remaining part of the proof we are going to show that $X^\aa$ embeds isometrically into $Q(n,G)$.
Our plan is the following we show that  $Q_0$ is isometric to $Q(n,G)$. Then we give a continuity argument which implies that for every $0 \le \aa <1$ the quotient spaces $Q_\aa$ should be isometric to $Q_0 = Q(n,G)$. We remark that if one is not interested in optimizing the dimension of a target space then he can use a simpler argument to construct a target space say $\wt Q(n,G)$.

 Let $x_1,\dots,x_n$ be points of $X$, $y_1,\dots,y_{n|G|}$ be points of $Y$ numbered s.t., 
$$\{y_1,\dots,y_{|G|}\} = \rho^{-1}(x_1),$$ $$\{y_{|G| + 1},\dots,y_{2|G|}\} = \rho^{-1}(x_2),$$ $$\dots$$ $${\{y_{(n-1)|G| - 1},\dots, y_{n|G|}\} = \rho^{-1}(x_n)}.$$
Consider a map of $Y$ into $\R^{n|G|}$ given by $y_i \mapsto e_i$, where $e_1,\dots,e_{n|G|}$ is  a standard basis of $\R^{n|G|}$. Let $L := \{x \in \R^{n|G|}| \sum_{i = 1}^{n|G|}x_i = 1 \}$ denotes the hyperplane in $\R^{n|G|}$.  Note that action of $G$ on $Y$ induces an action of $G$ on $L$.
We denote the corresponding quotient set by  $L/G$.

For every $0 \le \aa < 1$ the distance structure $Y^{\aa}$ on a set $Y$ induce a scalar product on $L$. We denote the corresponding Euclidean space by $L_{\aa}$. This euclidean structures induce a continuous family of quotient metrics $L_{\aa}/G$ on the set $L/G$. Note that $L_{\aa}/G$ is isometric to $Q_\aa$. In the case $\aa = 0$ the metric $L_0$ is the standard Euclidean metric on $L$. Moreover by examining the action of $G$ on $L$ one can see that 
$L_{0}/G$ is isometric to $Q(n,G)$.

\begin{lemma}\label{finite}
Isometric classes of quotient metrics on $L/G$ are finite in number.
\end{lemma}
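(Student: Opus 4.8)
The plan is to reduce the statement to a finiteness statement in the real representation theory of $G$. First I would linearize the quotient. Since $G$ permutes the basis vectors $e_1,\dots,e_{n|G|}$, it fixes the barycenter $b = \frac{1}{n|G|}\sum_i e_i \in L$, so translation by $-b$ carries $L$ onto the linear subspace $V := \{x \in \R^{n|G|} \mid \sum_i x_i = 0\} \cong \R^{\,n|G|-1}$ and turns the affine $G$-action on $L$ into a linear action on $V$. Every $G$-invariant Euclidean structure on $L$ (in particular each $L_\aa$) thereby corresponds to a $G$-invariant positive-definite inner product $g$ on the fixed real $G$-representation $V$, and the quotient metric space $(L,L_\aa)/G$ is isometric to $(V,g)/G$. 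Thus it suffices to bound the number of isometry classes among the spaces $(V,g)/G$ as $g$ ranges over the cone $\mathcal{C}$ of all $G$-invariant positive-definite inner products on $V$.

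The main observation is that congruence-equivalent inner products produce isometric quotients. Indeed, if $T\colon V \to V$ is a $G$-equivariant linear isomorphism with $g_1(\cdot,\cdot) = g_2(T\cdot,T\cdot)$, then $T$ is a $G$-equivariant isometry $(V,g_1) \to (V,g_2)$ and descends to an isometry $(V,g_1)/G \to (V,g_2)/G$. Hence the assignment sending the orbit of $g$ to the isometry class of $(V,g)/G$ is well defined, and since every quotient metric on $L/G$ arises from some $g \in \mathcal{C}$ it is surjective onto the set of isometry classes. Therefore the number of isometry classes is bounded above by the number of orbits of the group $A := \operatorname{GL}_G(V)$ of $G$-equivariant linear automorphisms, acting on $\mathcal{C}$ by congruence $T\cdot g := g(T^{-1}\cdot,T^{-1}\cdot)$. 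It remains to prove that this action has finitely many orbits.

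For this I would invoke the structure theory of real representations of the finite group $G$. Decompose $V = \bigoplus_i U_i^{\oplus m_i}$ into isotypic components, where the $U_i$ are pairwise non-isomorphic real irreducibles and $\mathbb{D}_i := \operatorname{End}_G(U_i) \in \{\R,\C,\mathbb{H}\}$. Every real $G$-representation carries an invariant inner product, so each $U_i$ is self-dual; consequently $\operatorname{Hom}_G(U_i,U_j^{\ast}) = \operatorname{Hom}_G(U_i,U_j) = 0$ for $i \ne j$, and every $G$-invariant bilinear form on $V$ is block-diagonal with respect to the decomposition. Thus $\mathcal{C}$ is the product over $i$ of the spaces of positive-definite $\mathbb{D}_i$-Hermitian forms on $\mathbb{D}_i^{\,m_i}$, while $A = \prod_i \operatorname{GL}_{m_i}(\mathbb{D}_i)$ acts factor-wise. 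On each factor the congruence action of $\operatorname{GL}_{m_i}(\mathbb{D}_i)$ on positive-definite Hermitian forms is transitive, since every such form equals $S^{\ast}S$ for an invertible $S$ by Gram--Schmidt over $\mathbb{D}_i$. Hence $A$ acts on $\mathcal{C}$ with a single orbit, and in particular with finitely many orbits, which by the previous paragraph yields finitely many isometry classes.

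The step I expect to require the most care is the block-diagonal normal form together with the threefold case distinction real/complex/quaternionic: one must verify that no $G$-invariant inner product couples non-isomorphic isotypic components, and that within each component the invariant inner products are precisely the positive-definite $\mathbb{D}_i$-Hermitian forms on which $\operatorname{GL}_{m_i}(\mathbb{D}_i)$ acts transitively. I emphasize that for the Lemma only finiteness of the orbit count is needed, so one may stop short of full transitivity and merely observe that each factor carries finitely many congruence orbits of positive-definite Hermitian forms, which is immediate. Finiteness of the orbit count then bounds the number of isometry classes of quotient metrics on $L/G$ from above, which is exactly the assertion of the Lemma; the continuity of the family $\aa \mapsto L_\aa/G$ together with connectedness of the parameter interval will subsequently pin down the single occurring class as $L_0/G = Q(n,G)$.
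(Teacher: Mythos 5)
Your proof is correct, and it takes a genuinely different route from the paper's --- one that in fact establishes more than the Lemma asserts. The paper fixes the Euclidean structure on $L$ and lets the orthogonal representation $\rho\colon G \to O(L)$ vary: it notes that two orthogonal representations with the same decomposition into irreducibles are orthogonally conjugate, and since $G$ has only finitely many irreducible real representations there are finitely many possible decompositions in the fixed dimension, hence finitely many isometry classes of quotients. You instead fix the permutation representation on $V\cong\R^{n|G|-1}$ and let the $G$-invariant inner product vary, observe that equivariantly congruent inner products give isometric quotients, and then show via the isotypic decomposition and the transitivity of $\operatorname{GL}_{m}(\mathbb{D})$ on positive-definite Hermitian forms ($\mathbb{D}\in\{\R,\C,\mathbb{H}\}$) that the congruence action has a \emph{single} orbit. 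This is stronger than finiteness: it shows directly that every $L_\aa/G$ is isometric to $L_0/G=Q(n,G)$, which would let one delete the paper's subsequent Gromov--Hausdorff continuity argument entirely. Your heaviest step, the block-diagonal Hermitian normal form, can also be bypassed: given two invariant inner products $g_1,g_2$, the $g_1$-positive operator $A$ with $g_2(x,y)=g_1(Ax,y)$ commutes with the $G$-action because both forms are invariant, hence so does $A^{1/2}$, and $A^{1/2}$ is then a $G$-equivariant isometry $(V,g_2)\to(V,g_1)$ descending to the desired isometry of quotients. Both your argument and the paper's ultimately rest on Schur's lemma and the finiteness of irreducible real representations of $G$, but yours is sharper in its conclusion, while the paper's, giving only finiteness, must be supplemented by the continuity argument to identify the isometry class.
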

\begin{proof}
Every quotient metric on $L/G$ is induced by a representation $\rho:G \rightarrow O(L)$. The representation $\rho$ can be decomposed into direct sum of irreducible representations 
$\rho = \bigoplus \rho_i$.  Since there exist only finite number of inequivalent irreducible representations it suffices to show that for a pair of representations  $\rho, \wt \rho:G \rightarrow O(L)$ having the same decomposition  into direct sum of irreducible representations we have $$\wt \rho = T^{-1} \circ \rho \circ T, $$
where $T$ is an orthogonal operator.
It easy to see that a pair of equivalent irreducible representations  $\rho_i, \wt \rho_i$ there exists an orthogonal operator $T_i$, s.t.
  $$\wt \rho_i = T_i^{-1} \circ \rho_i \circ T_i. $$
  Finally we can take $T = \bigoplus T_i$.
\end{proof}


To show that for every $0 \le \aa < 1$ the quotient space $L_\aa/G$  is isometric to $Q(n,G)$ we going to use the concept of noncompact Gromov-Hausdorff Limit (see \cite{BBI}). First we have to introduce some relevant terminology. A pointed metric space is a pair $(X,p)$
of a metric space and a point $p \in X$. A pair of pointed metric spaces  $(X,p)$,  $(Y,q)$ are sad to be isometric if there exists an bijective isometry $f:X \rightarrow Y$ s.t., $f(p) = q$. A metric space is called boundedly compact if all its closed bounded subsets are compacts. 

 Now suppose by contradiction that there exists $0 < \aa < 1$ s.t. $L_\aa/G$ is not isometric to $L_0/G = Q(n,G)$.
Then there exist a pair of boundedly compact pointed metric spaces $(X, p)$, $(Y, q)$ and a set of numbers  $0 \le \wt \aa, \aa_1, \aa_2, \aa_3, \dots < 1$ s.t., 
\begin{enumerate}
\item{$(X,p)$ is not isometric to $(Y,q)$.}\label{NotIso}
\item{$(X,p)$ is isometric to $(L_{\wt\aa}/G, 0)$,}
\item{$(Y,q)$ is isometric to $(L_{\aa_i}/G, 0)$,}
\item{$\aa_i \underset{i \rightarrow \infty}{\rightarrow} \wt \aa.$}
\end{enumerate}

The continuity of the family $L_\aa/G$ implies that  $(L_{\aa_i}/G, 0)$ converge to $(L_{\wt\aa}/G, 0) \cong (X,p)$ in the Gromov-Hausdorff sense. This sequence is also  obviously converges to $(Y,q)$. By \cite{BBI}, Theorem 8.1.7 we have that $(X,p)$ is isometric to $(Y,q)$, which contradicts (\ref{NotIso}).


\end{proof}

\subsection*{Acknowledgements}
I thank Sergey V. Ivanov and Alexander Lytchak for advising me during different stages of the work. 
I'm grateful to Mikhail Basok, Christian Lange and Nina Lebedeva for fruitful discussions.
 The paper is supported by the Russian Science Foundation under grant 16-11-10039.

\bibliography{circle}

\begin{thebibliography}{1}

\bibitem{BBI}
Dmitri Burago, Yuri Burago, and Sergei Ivanov.
\newblock {\em A Course in Metric Geometry}, volume~33.
\newblock American Mathematical Soc., 2001.

\bibitem{DM}
Michel Deza and Hiroshi Maehara.
\newblock A few applications of negative-type inequalities.
\newblock {\em Graphs and Combinatorics}, 10(2-4):255, 1994.

\bibitem{M}
Hiroshi Maehara.
\newblock Euclidean embeddings of finite metric spaces.
\newblock {\em Discrete Mathematics}, 313(23):2848--2856, 2013.

\bibitem{SCH2}
Isaac~J. Schoenberg.
\newblock Remarks to {M}aurice {F}rechet's {A}rticle``{S}ur {L}a {D}efinition
  {A}xiomatique {D}'{U}ne {C}lasse {D}'{E}space {D}istances {V}ectoriellement
  {A}pplicable {S}ur {L}'{E}space {D}e {H}ilbert``.
\newblock {\em Annals of Mathematics}, pages 724--732, 1935.

\bibitem{SCH}
Isaac~J. Schoenberg.
\newblock Metric spaces and positive definite functions.
\newblock {\em Transactions of the American Mathematical Society},
  44(3):522--536, 1938.

\bibitem{SEI}
J.J. Seidel.
\newblock Quasiregular two-distance sets.
\newblock In {\em Indagationes Mathematicae (Proceedings)}, volume~72, pages
  64--70. Elsevier, 1969.

\end{thebibliography}
\bibliographystyle{plain}

\end{document}